\documentclass[11pt,letterpaper]{article}
\usepackage{mathrsfs}
\usepackage{geometry}
\usepackage{graphicx}
\usepackage{amssymb}
\usepackage{epstopdf}
\usepackage{amsmath,amscd}
\usepackage{amsthm}
\theoremstyle{plain}
\textwidth=430pt
\textheight=660pt
\DeclareGraphicsRule{.tif}{png}{.png}{`convert #1 `dirname #1 `/`basename #1 .tif`.png}
\setlength{\parindent}{1.5em}

\newtheorem{theorem}{Theorem}

\newtheorem{lemma}[theorem]{Lemma}

\title{Uniqueness of Infinite Homogeneous Clusters in 1-2 Model}
\author{Zhongyang Li\footnote{Statistical Laboratory, Center for Mathematical Sciences,
Cambridge University, Wilberforce Road, Cambridge, CB3 0WB, UK, z.li@statslab.cam.ac.uk}}
\date{}
\begin{document}
\maketitle

\begin{abstract}
A 1-2 model configuration is a subset of edges of the hexagonal lattice such that each vertex is incident to one or two edges. We prove that for any translation-invariant Gibbs measure of 1-2 model, almost surely the infinite homogeneous cluster is unique.
\end{abstract}

\section{Introduction}A 1-2 model configuration is a choice of subset of edges of the hexagonal lattice such that each vertex is incident to one or two edges. An example of 1-2 model configurations is shown in Figure \ref{ot}.
\begin{figure}[htbp]\label{ot}
\centering
\scalebox{0.7}[0.7]{\includegraphics*{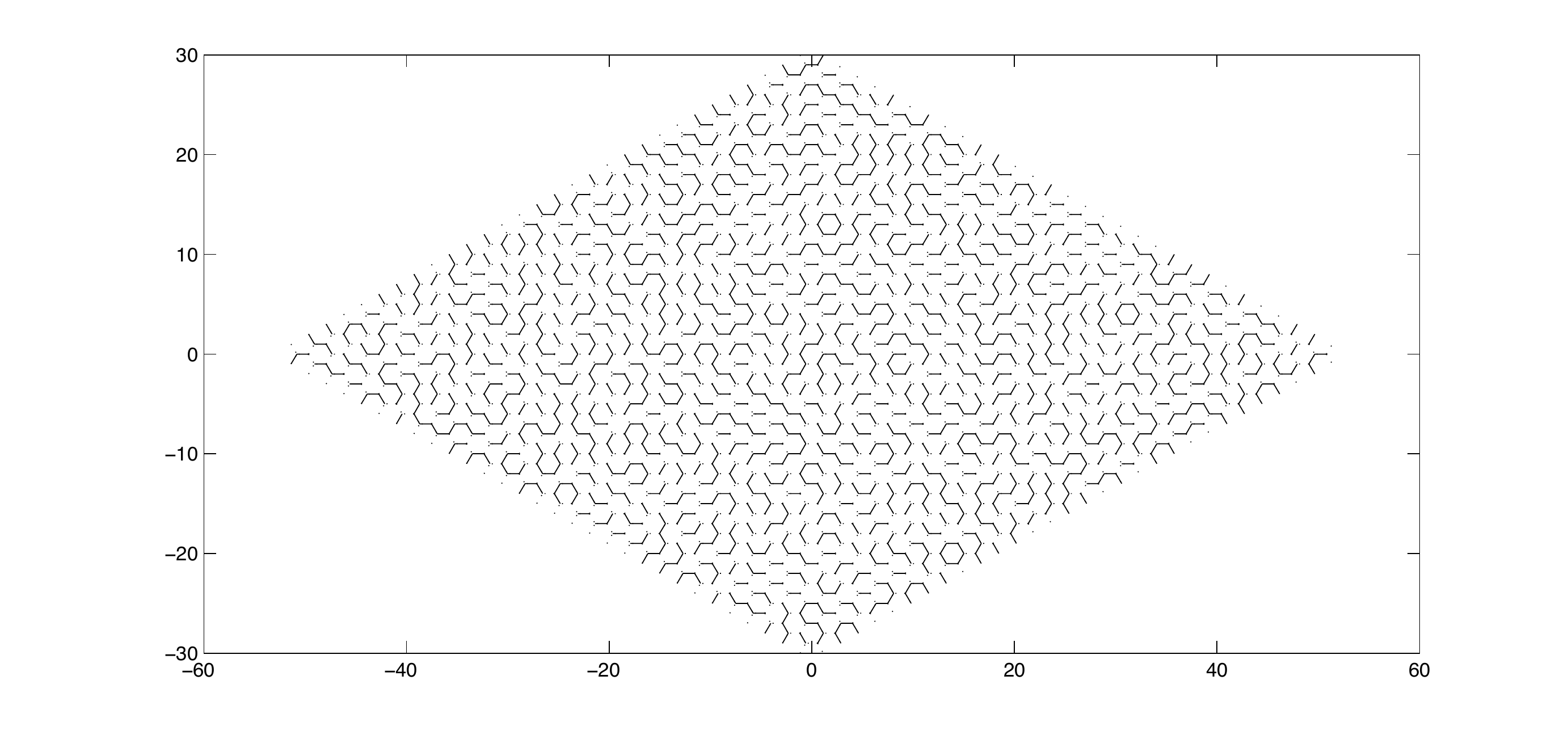}}
\caption{1-2 model configuration}
\end{figure}

The uniform 1-2 model (not-all-equal relation) was studied by computer scientists Schwartz and Bruck (\cite{sb}). They computed the partition function (total number of configurations) of the 1-2 model on a finite graph by the holographic algorithm (\cite{val}). In (\cite{li1}), we studied a generalized holographic algorithm, which could compute the local statistics of more general vertex models, including the non-uniform 1-2 model. A new approach to solve the 1-2 model is explored in (\cite{li2}) by constructing a measure-preserving correspondence between 1-2 model configurations on the hexagonal lattice and perfect matchings (\cite{k}) on a decorated lattice. Using a large torus to approximate the infinite planer graphs, we constructed in \cite{li2} an explicit translation-invariant, parameter-dependent measure for 1-2 model configurations on the planar hexagonal lattice, and proved that the 1-2 model percolates with respect to the changing parameters. See \cite{g} for an introduction of the percolation theory. In this paper, we prove that for any translation-invariant measure of 1-2 model configurations, almost surely there is at most one infinite homogeneous cluster.

Let $\mathbb{H}=(V,E)$ be the hexagonal lattice embedded into the whole plane. Let $v\in V$ be a vertex of $\mathbb{H}$. There is a one-to-one correspondence between configurations at $v$ (subsets of incident edges of $v$) and the set of all 3-digit binary numbers. Namely, since $v$ has 3 incident edges, we may assume that the horizontal edge of each vertex corresponds to the right digit, and a one-to-one correspondence between incident edges and digits can be constructed by moving counter-clockwise around a vertex and right-to-left along the digits. If an edge is included in the configuration, then the corresponding digit takes the value ``1''; otherwise the corresponding digit takes the value ``0''. See Figure \ref{loc} for examples of such a correspondence.
 \begin{figure}[htbp]\label{loc}
\centering
\includegraphics*{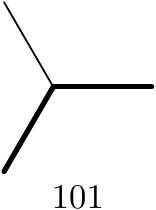}\qquad\qquad \includegraphics*{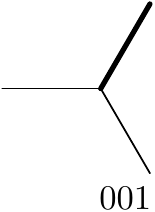}
\caption{local configurations and binary numbers}
\end{figure}

The weight function at a vertex is an assignment of a nonnegative number to each configuration at the vertex. For a 1-2 model configuration, since we require that each vertex can have only one or two incident edges, the weights for configurations $\{000\}$ and $\{111\}$ are 0. Moreover, throughout this paper we assume that the weight of the configurations at each vertex is 
\begin{eqnarray}
\begin{array}{cccccccc}000&001&010&011&100&101&110&111\\ 0&a&b&c&c&b&a&0\end{array},\label{sig}
\end{eqnarray}
where $a,b,c>0$ are arbitrary positive numbers. Given the weights of configurations as in (\ref{sig}), we say that an edge is $a$-type (resp. $b$-type, $c$-type) if it is the unique present edge in the configuration $\{001\}$ (resp. $\{010\}$, $\{100\}$). Given the correspondence of edges with the digits described previously, an edge is $a$-type if and only if it is horizontal; and starting from an $a$-type edge, moving counter-clockwise around a vertex, we meet the $b$-edge and the $c$-edge in cyclic order.

A Gibbs measure $\mu$ for the 1-2 model on $\mathbb{H}$ is a probability measure on the sample space of all possible 1-2 model configurations (denote the sample space by $\Omega$), such that for any finite subgraph $\Lambda\subset \mathbb{H}$, and any fixed configuration $\omega_{\Lambda^c}$ on the complement graph $\Lambda^c$, the probability of a configuration $\omega_{\Lambda}$ on $\Lambda$, conditional on $\omega_{\Lambda^c}$, is proportional to the product of configuration weights at each vertex of $\Lambda$. Namely,
\begin{eqnarray*}
\mu(\omega_{\Lambda}|\omega_{\Lambda^c})\propto\prod_{v\in \Lambda}w(\omega_{\Lambda}|_v)
\end{eqnarray*}
where $\omega_{\Lambda}|_v$ is the configuration of $\omega_{\Lambda}$ restricted at the vertex $v$, i.e. $\omega_{\Lambda}|_{v}$ is one of the six possible 1-2 model configurations $\{001\},\{010\},\{011\},\{100\},\{101\},\{110\}$, and $w(\cdot)$ is the weight function at a vertex.

A homogeneous cluster of a 1-2 model configuration, is a connected subset of vertices of $\mathbb{H}$, in which each vertex has the same configuration, i.e., one of $\{001\}$, $\{010\}$, $\{011\}$, $\{100\}$, $\{101\}$, $\{110\}$. See Figure \ref{101} for examples of homogeneous $\{101\}$ clusters.
 \begin{figure}[htbp]\label{101}
\centering
\scalebox{0.5}[0.5]{\includegraphics*{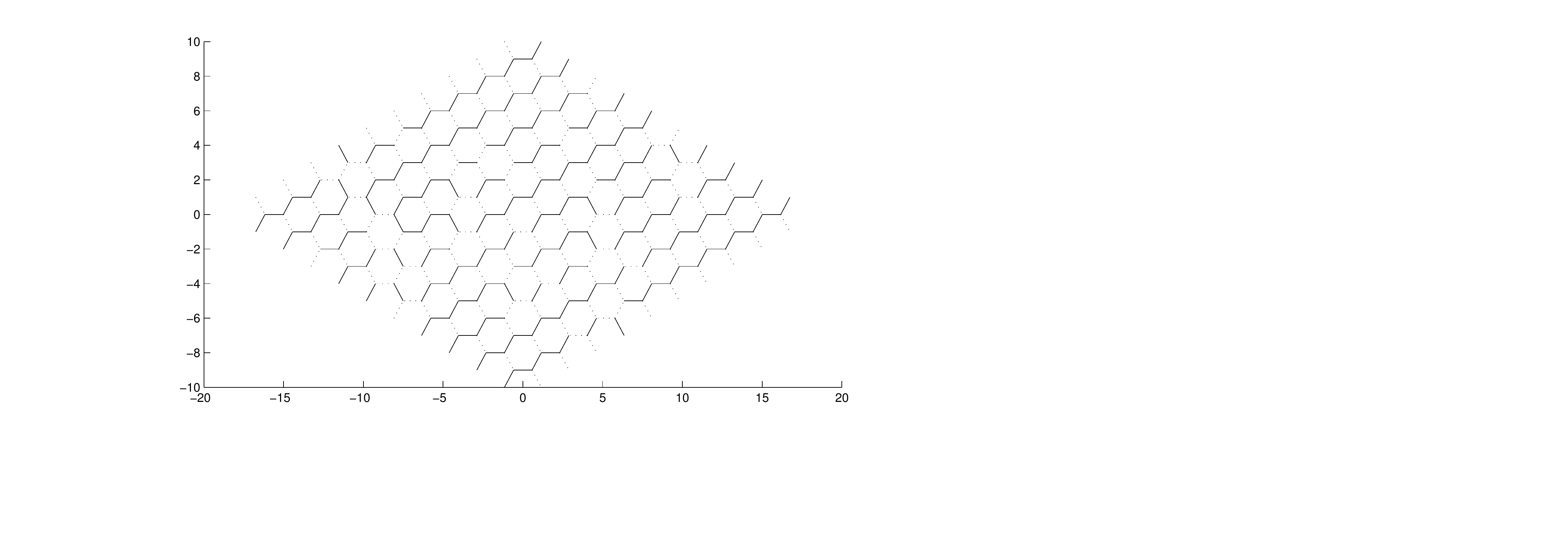}}
\caption{homogeneous $\{101\}$ clusters}
\end{figure}

 A homogenous cluster is infinite if it consists of infinitely many vertices. It is proved in \cite{li2} that under the explicitly constructed, parameter-dependent Gibbs measure, an infinite homogeneous cluster exists almost surely for some values of parameters, while infinite homogeneous clusters do not exist almost surely for some other values of parameters. The main theorem of this paper is as follows:

\begin{theorem}For any translation-invariant Gibbs measure of 1-2 model configurations on the whole-plane hexagonal lattice $\mathbb{H}$, almost surely there is at most one infinite homogeneous cluster.
\end{theorem}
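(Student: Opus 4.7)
The strategy is to adapt the Burton--Keane uniqueness-of-infinite-cluster argument to the 1-2 model. Let $N(\omega)$ denote the number of infinite homogeneous clusters of $\omega$; since $N$ is invariant under translations of $\mathbb{H}$ and $\mu$ is translation-invariant, ergodic decomposition reduces the problem to the case that $\mu$ is ergodic, in which $N$ is almost surely a constant $n\in\{0,1,2,\ldots,\infty\}$, and the goal becomes $n\le 1$. The key probabilistic input is insertion tolerance: because $a,b,c>0$, for every finite vertex set $\Lambda\subset V$ and every valid 1-2 configuration $\sigma_\Lambda$ compatible with the fixed exterior data $\omega_{\Lambda^c}$, the Gibbs prescription gives $\mu(\omega_\Lambda=\sigma_\Lambda\mid\omega_{\Lambda^c})>0$.

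To exclude $2\le n<\infty$ I would use a Newman--Schulman-type merging argument: choose $R$ so large that with positive probability the ball $B_R$ meets all $n$ infinite clusters, then explicitly construct, inside a larger ball $B_{R'}\supset B_R$, a valid 1-2 configuration that fuses the filaments of the distinct infinite clusters entering $B_{R'}$ into a single homogeneous cluster. By insertion tolerance the modified configuration has positive conditional probability, which forces $\mathbb{P}(N\le n-1)>0$ and contradicts $N\equiv n$.

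To exclude $n=\infty$ I would adapt the trifurcation count. Call $v$ a trifurcation if the infinite homogeneous cluster containing $v$ decomposes, after a suitable local surgery at $v$, into exactly three disjoint infinite pieces. If $n=\infty$, a fixed vertex sees three disjoint infinite homogeneous clusters in its bounded neighbourhood with positive probability; by insertion tolerance one can locally rewire so that this vertex becomes a trifurcation, giving a positive density of trifurcations by translation invariance. On the other hand, the standard Burton--Keane boundary-counting argument shows that any configuration has at most $|\partial B_R|$ trifurcations in $B_R$, so the density is zero. The contradiction forces $n\ne\infty$, and we conclude $n\in\{0,1\}$.

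The principal obstacle is engineering the explicit 1-2 modifications required in the previous two paragraphs. Unlike in Bernoulli percolation one cannot simply turn on every edge inside $B_{R'}$ because of the 1-or-2 constraint at every vertex, and unlike in Potts-type spin models the six admissible vertex types are coupled through shared edges; the modification must therefore reconcile arbitrary boundary data on $\partial B_{R'}$ with a carefully chosen interior configuration. The delicate combinatorial task is to build, for every admissible boundary prescription, an annular 1-2 configuration that either routes all incoming cluster filaments into one homogeneous cluster or creates a trifurcation at a designated vertex; the natural tool is the hexagon flip, which preserves the 1-2 constraint and supplies the flexibility needed to patch the required configuration onto arbitrary boundary data.
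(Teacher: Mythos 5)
Your outline follows exactly the route the paper takes: ergodic decomposition, a Newman--Schulman-type merging argument to kill $2\le n<\infty$, and a Burton--Keane encounter-box (trifurcation) count to kill $n=\infty$. The ``insertion tolerance'' you state is also correct as far as it goes: since $a,b,c>0$, any \emph{valid} interior configuration compatible with the exterior data has positive conditional probability. But the entire mathematical content of the theorem lies in the step you explicitly defer: exhibiting, for \emph{arbitrary} exterior data, a valid 1-2 configuration on the modified region that (i) turns every vertex of an inner box into a $\{001\}$ vertex, thereby fusing all infinite $\{001\}$ clusters meeting that box, or (ii) produces an encounter box. Saying that ``the natural tool is the hexagon flip'' does not discharge this obligation, and the flip by itself does not obviously suffice: toggling edges to make every interior edge of a box horizontal can leave the two corner vertices of the box (those with two neighbours outside the box) with degree $3$, and fixing that forces a cascading sequence of removals and additions around the adjacent exterior hexagon, each step of which must be checked not to violate the 1-or-2 constraint at the next vertex. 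This is precisely the construction the paper carries out vertex by vertex, and without it the argument is an announcement of a strategy rather than a proof.

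A second, quieter gap sits in your trifurcation step. You propose to start from the event that a bounded neighbourhood meets three disjoint infinite clusters and then ``locally rewire.'' Because the rewiring necessarily disturbs the configuration near the corners of the box (the exterior hexagons $h_1,h_2$ in the paper's notation), the three chosen clusters must be guaranteed to enter the box away from those disturbed regions; otherwise the surgery may disconnect the very filaments you intend to route into the trifurcation. The paper handles this by demanding that at least $31$ infinite clusters meet the box, so that three of them can be selected avoiding a fixed set of $28$ ``bad'' vertices. Your proposal, which asks only for three clusters, would need this quantitative strengthening to go through. In short: right skeleton, but the combinatorial surgery that replaces the missing finite-energy property -- which is the reason this theorem requires a separate paper at all -- is not supplied.
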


It is proved by Burton and Keane (\cite{bk}) that for any translation-invariant, finite energy measure on $\{0,1\}^{\mathbb{Z}^d}$ configurations, almost surely there is at most one infinite open cluster. However, since the Gibbs measure for the 1-2 model does not satisfy the finite-energy condition in \cite{bk}, the proof in \cite{bk} does not work for the 1-2 model case. We prove the theorem for $\{001\}$ cluster in Section 2, and the result for all the other homogeneous clusters can be proved using exactly the same technique.

\section{Infinite Clusters}

\begin{lemma}\label{fno}Let $\mu$ be an ergodic, translation-invariant Gibbs measure for 1-2 model configurations. Let $\mathcal{N}_{\{001\}}$ be the number of infinite $\{001\}$-clusters. For any $1<k<\infty$,
\begin{eqnarray*}
\mu(\mathcal{N}_{\{001\}}=k)=0
\end{eqnarray*}
\end{lemma}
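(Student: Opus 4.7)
The plan is to adapt the Burton--Keane uniqueness argument from \cite{bk} to our Gibbsian setting, replacing the finite-energy hypothesis (which fails for the 1-2 model) by the strict positivity of the weights $a,b,c>0$ together with the DLR compatibility relations for $\mu$. Since $\mu$ is ergodic and translation-invariant, $\mathcal{N}_{\{001\}}$ is $\mu$-a.s.\ equal to a deterministic constant in $\{0,1,2,\ldots,\infty\}$, so it suffices to derive a contradiction from $\mu(\mathcal{N}_{\{001\}}=k)=1$ under the hypothesis $2\le k<\infty$.

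First I would localize. Let $B_n$ denote the ball of graph-radius $n$ around a fixed vertex of $\mathbb{H}$, and let $E_n$ be the event that every infinite $\{001\}$-cluster meets $B_n$. Since there are $\mu$-a.s.\ only $k$ infinite clusters and each is non-empty, the maximum of their graph-distances to the origin is $\mu$-a.s.\ finite, so $E_n\uparrow\{\mathcal{N}_{\{001\}}=k\}$ and $\mu(E_n)\uparrow 1$; fix $n$ with $\mu(E_n)>0$. Next choose radii $n<M<N$ with $N-M$ large. For each sample $\omega\in E_n$ I would construct an alternative configuration $\omega^*$ that agrees with $\omega$ on $B_N^c$ and has at most one infinite $\{001\}$-cluster. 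On the inner ball $B_M$, define $\omega^*$ by assigning the local configuration $\{001\}$ at every vertex; this is internally consistent and turns $B_M$ into a single $\{001\}$-block. On the buffer annulus $A=B_N\setminus B_M$, define $\omega^*$ as any valid 1-2 configuration that is simultaneously (a) compatible with the edge states forced on the inner boundary of $A$ by the $\{001\}$-block on $B_M$, (b) compatible with the (fixed) crossing edges to $B_N^c$ inherited from $\omega$ on the outer boundary of $A$, and (c) such that every vertex $v\in C_i\cap\partial^{\mathrm{in}}B_N$ through which an original infinite cluster $C_i$ of $\omega$ leaves $B_N$ is joined to $\partial^{\mathrm{in}}B_M$ by a path of vertices in local configuration $\{001\}$ inside $A$. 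Because each $C_i$ meets $B_n\subset B_M$ (by $E_n$) and is infinite, it must also exit $B_N$ through some such $v$, so in $\omega^*$ every $C_i$ becomes connected to the monochromatic $B_M$-block and hence to every other $C_j$; consequently $\mathcal{N}_{\{001\}}(\omega^*)\le 1$.

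Finally I would invoke the DLR relation: conditional on $\omega|_{B_N^c}$, the probability that the interior equals $\omega^*|_{B_N}$ is $\prod_{v\in B_N}w(\omega^*|_v)$ divided by a finite partition function, which is strictly positive since $a,b,c>0$. Integrating against the $E_n$-restricted law of $\omega|_{B_N^c}$ (of positive $\mu$-mass) then yields $\mu(\mathcal{N}_{\{001\}}\le 1)>0$, contradicting $\mathcal{N}_{\{001\}}=k\ge 2$ $\mu$-a.s. The hard part will be the existence claim in step (c) of the annular construction: one must show that for every realization of the exterior in $E_n$ there is in fact a valid 1-2 configuration on $A$ which meets both boundary conditions and contains all the required connecting $\{001\}$-paths. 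This is the step that replaces the trivial one-edge modifications available under finite energy in \cite{bk}; I expect it to be handled by routing an almost-disjoint family of radial $\{001\}$-paths through $A$ from the finitely many entry points in $\bigcup_i C_i\cap\partial^{\mathrm{in}}B_N$ inward to $\partial^{\mathrm{in}}B_M$, and filling the remaining vertices of $A$ by a greedy local choice among the six admissible configurations $\{001\},\ldots,\{110\}$, using the fact that $N-M$ is large enough to absorb the prescribed boundary data.
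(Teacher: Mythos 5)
Your overall strategy --- localize the $k$ clusters in a box, surgically rewrite the configuration inside a larger box so that all of them merge into one, and pay for the surgery with the DLR relation and the strict positivity of $a,b,c$ --- is exactly the strategy of the paper, and the final step (a uniform positive lower bound on the conditional probability of the rewritten interior, contradicting $\mu(\mathcal{N}_{\{001\}}=k)=1$ via ergodicity) is fine. The problem is that you have deferred precisely the step that carries all the content of the lemma. Your step (c) asserts the existence of a valid 1-2 configuration on the annulus $A=B_N\setminus B_M$ that (i) respects the fixed states of the edges crossing $\partial B_N$, (ii) is compatible with the all-$\{001\}$ block on $B_M$, and (iii) contains $\{001\}$-paths from every exit vertex of every infinite cluster inward to $B_M$. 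You acknowledge this is ``the hard part'' and propose to handle it by routing radial paths and ``filling the remaining vertices of $A$ by a greedy local choice.'' For a hard-constrained model this is not an argument: a greedy fill can reach a vertex all three of whose incident edges have already been forced absent (or present) by earlier choices, violating the 1-2 law with no local repair available. Since the whole reason Burton--Keane does not apply verbatim is the failure of finite energy, i.e.\ the non-triviality of exactly this kind of local rearrangement, the proposal as written has a genuine gap.

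The paper's surgery avoids the routing problem altogether, and you could adopt it. Inside the modification box, declare an interior edge present if and only if it is horizontal (an $a$-type edge), keeping the edges that cross the box boundary in their original states. Two things then come for free. First, every vertex whose three edges are all interior receives exactly its unique horizontal edge, hence gets configuration $\{001\}$, so the inner box becomes a single $\{001\}$ block; every boundary vertex automatically has its horizontal edge controlled and at least one absent edge, so the 1-2 law holds everywhere except at the two corner vertices, which are repaired by an explicit finite case analysis around the two adjacent hexagons. Second --- and this is the point that makes your step (c) unnecessary --- the surgery is \emph{monotone} on $\{001\}$ vertices: any vertex that had configuration $\{001\}$ in $\omega$ still has it in $\omega'$. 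Hence each original infinite $\{001\}$ cluster survives as a set of $\{001\}$ vertices, and since each of them meets the inner box (on your event $E_n$), they are all absorbed into the single block, giving $\mathcal{N}_{\{001\}}(\omega')=1$ with no connecting paths to construct. If you prefer to keep your annular construction, you must actually exhibit the configuration on $A$ and verify the 1-2 law at every vertex of $A$ and of $\partial B_N$ for an arbitrary admissible exterior; until that is done the proof is incomplete.
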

\begin{proof}Recall that in a $\{001\}$ configuration of a vertex, only the horizontal incident edge is present.  We prove the lemma by contradiction. Without loss of generality, assume $0<c\leq b\leq a$. Since $\mu$ is ergodic, and $\{\mathcal{N}_{\{001\}}=k\}$ is a translation-invariant event, then either $\mu(\mathcal{N}_{\{001\}}=k)=0$ or $\mu(\mathcal{N}_{\{001\}}=k)=1$. Assume there exists $1<k<\infty$, such that
\begin{eqnarray}\label{wa}
\mu(\mathcal{N}_{\{001\}}=k)=1.
\end{eqnarray}
Let $B_n$ be an $n\times n$ box of the hexagonal lattice centered at the origin. i.e, a rectangle domain with $n$ vertices incident to vertices outside the domain on each side, as illustrated in Figure \ref{hex}, in which the subgraph bounded by the outer dashed rhombus is a $5\times 5$ box, and the subgraph bounded by the inner dashed rhombus is a $3\times 3$ box.
\begin{figure}[htbp]\label{hex}
\centering
\scalebox{1}[1]{\includegraphics*{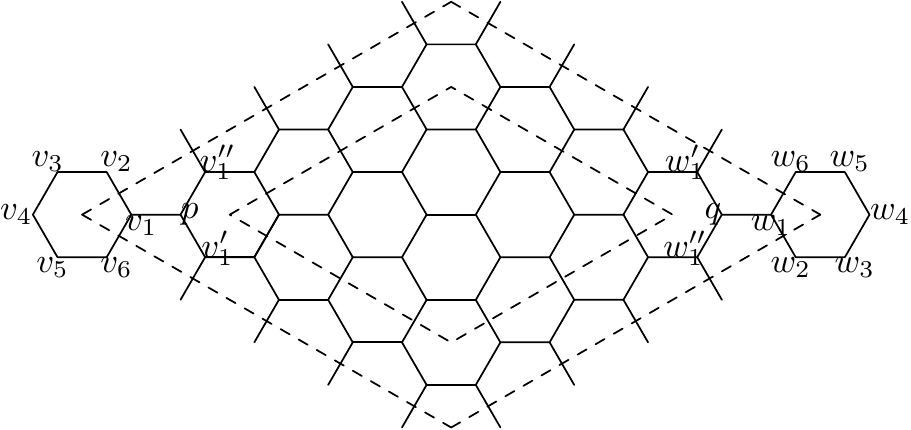}}
\caption{$n\times n$ box}
\end{figure}

Let $\mathcal{S}_n$ be the event that $B_n$ intersects all of the $k$ infinte $\{001\}$ clusters. Then
\begin{eqnarray*}
1=\mu(\cup_{n}\mathcal{S}_n)=\lim_{n\rightarrow\infty}\mu(\mathcal{S}_n).
\end{eqnarray*}
Hence there exists $N$, such that $\mu(\mathcal{S}_{N})>\frac{1}{2}$. Let $\omega$ be a configuration in $\mathcal{S}_{N}$. Consider the box $B_{N+2}$. There are two types of edges in $B_{N+2}$: either an interior edge whose both endpoints are in $B_{N+2}$; or a boundary edge connecting one vertex in $B_{N+2}$ and one vertex outside $B_{N+2}$. The boundary edges are those intersecting the outer dashed rhombus in Figure \ref{hex}. We are going to change the configurations in $B_{N+2}$ to derive a contradiction.

For the time being, we keep the configuration for all boundary edges; while for each interior edge of $B_{N+2}$, it is present if and only if it is horizontal (an $a$-type edge). There are three types of vertices in $B_{N+2}$: type I: a vertex whose all three neighbors are still in $B_{N+2}$; type II: a vertex with two neighbors in $B_{N+2}$, but one neighbor outside $B_{N+2}$; type III: a vertex with one neighbor in $B_{N+2}$, but two neighbors outside $B_{N+2}$. After the first step of changing configurations as described above, all type I vertices of $B_{N+2}$ has a configuration $\{001\}$. In particular, all the vertices of $B_{N}$ are type I vertices of $B_{N+2}$, hence all vertices of $B_{N}$ has a configuration $\{001\}$. All the type II vertices has at least one present edge and one unpresent edge, hence the configurations at type II vertices do not violate the rule that each vertex has degree 1 or 2. Now consider the type III vertices of $B_{N+2}$; there are only 2 type III vertices, lying in the two corners of $B_{N+2}$, labeled by $v_1$ and $w_1$ as in Figure \ref{hex}. They have at least one incident present edge, since the horizontal incident edge is present. We will describe the change of configurations around $v_1$ here; the change of configurations around $w_1$ are very similar. If $v_1$ has degree 1 or 2, then we are done and do not need to change configurations any more. If $v_1$ has degree 3, we consider the hexagon, denoted by $h_1$, which includes $v_1$ but does not include any other vertices of $B_{N+2}$. Let $v_1,v_2,v_3,v_4,v_4,v_5,v_6$ be all the vertices of $h_1$ in cyclic order, see Figure 2. If $v_3$ has configuration $\{001\}$, remove the edge $v_1v_2$, and we get a 1-2 model configuration. Similarly, if $v_5$ has configuration $\{001\}$, remove the edge $v_1v_6$, and we get a 1-2 model configuration. If neither $v_3$ nor $v_5$ have configuration $\{001\}$, we change the configuration as follows. Remove $v_1v_2$. After removing $v_1v_2$, if $v_2$ has degree 1, we are done. if $v_2$ has degree 0, add $v_2v_3$. After adding $v_2v_3$, if $v_3$ has degree 2, we are done. If $v_3$ has degree 3, remove $v_3v_4$. After removing $v_3v_4$, if $v_4$ has degree 1, we are done. If $v_4$ has degree 0, add $v_4v_5$. After adding $v_4v_5$, if $v_5$ has degree 2, we are done. If $v_5$ has degree 3, remove $v_5v_6$. Then $v_6$ has at least one present incident edge $v_1v_6$, and 1 unpresent incident edge $v_5v_6$. Hence $v_6$ has degree 1 or 2, we are done. Similar process applies for $w_1$ on the other corner.

Let $\omega'$ be the new configuration obtained from $\omega$ by the configuration-changing process described above.
We  $\omega'$ has exactly one infinite $\{001\}$. Note that if a vertex has $\{001\}$ configuration in $\omega$, then it has $\{001\}$ configuration in $\omega'$. Hence each infinite $\{001\}$ cluster of $\omega$ must be a subset of an infinite $\{001\}$ cluster of $\omega'$. Moreover, the configuration-changing process described above only changes configurations at a finite number of vertices, $\omega'$ cannot have infinite $\{001\}$ clusters which do not include an infinite cluster $\{001\}$ cluster of $\omega$.  Since all the infinite $\{001\}$ clusters of $\omega$ intersect $B_{N}$, all the infinite $\{001\}$ clusters of $\omega'$ intersect $B_{N}$. But all the vertices in $B_{N}$ has the configuration $\{001\}$. As a result, there is exactly one infinite $\{001\}$ cluster in $\omega'$. We consider the probability that such an configuration $\omega'$ occurs. This probability is bounded below by the probability of the event $\mathcal{S}_N$, multiplying a factor due to the change of configurations at finitely many vertices. Namely,
\begin{eqnarray*}
\mu(\mathcal{N}_{\{001\}}=1)>\frac{1}{2}\left(\frac{c}{6a}\right)^{2(N+2)^2+10}>0,
\end{eqnarray*}
which is a contradiction to (\ref{wa}), and the lemma follows.
\end{proof}

Before proving the theorem, we shall introduce the following notation. Let $Y$ be a finite set with at least three elements. A partition of $Y$ is a collection $P=\{P_1,P_2,P_3\}$ of the three non-empty disjoint subset of $Y$ whose union is $Y$. Partitions $P$ and $Q$ are compatible if there is an ordering of each such that $Q_2\cup Q_3\subset P_1$. A collection $\mathcal{P}$ of partitions is compatible if each pair $P,Q$ is compatible.

\begin{lemma}\label{cp}(\cite{bk})If $\mathcal{P}$ is a compatible partition of Y, then
\begin{eqnarray*}
|\mathcal{P}|\leq |Y|-2
\end{eqnarray*}
where $|\cdot|$ denote the cardinality of a set.
\end{lemma}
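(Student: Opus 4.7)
My plan is to prove Lemma \ref{cp} by induction on $|Y|$. When $|Y|=3$, the only partition of $Y$ into three non-empty sets is the partition into singletons, so $|\mathcal{P}|\leq 1=|Y|-2$. For the inductive step, assume the bound for every ground set of size strictly less than $|Y|$ and fix any reference partition $Q=\{Q_1,Q_2,Q_3\}\in\mathcal{P}$. For each other $P\in\mathcal{P}$, compatibility of $P$ with $Q$ singles out a unique part $Q_{i(P)}$ that contains two of the parts of $P$; uniqueness follows from disjointness of the $Q_j$'s. This sorts $\mathcal{P}\setminus\{Q\}$ into three subcollections $\mathcal{P}_1,\mathcal{P}_2,\mathcal{P}_3$ according to $i(P)$.

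Next I would restrict each $\mathcal{P}_i$ to a compatible collection on a smaller ground set. Set $Y_i=Q_i\sqcup\{*_i\}$, where $*_i$ is a fresh symbol representing the complement $Q_j\cup Q_k$. For each $P\in\mathcal{P}_i$, order $P$ so that $P_1,P_2\subset Q_i$ and $P_3\supset Q_j\cup Q_k$, and define
\[
P|_i=\{P_1,\;P_2,\;(P_3\cap Q_i)\cup\{*_i\}\}.
\]
All three parts are non-empty (the first two inherit non-emptiness from $P$, the third contains $*_i$) and disjoint with union $Y_i$, so $P|_i$ is a valid 3-partition of $Y_i$.

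The step requiring the most care, and the one I expect to be the main obstacle, is verifying that pairs $P,P'\in\mathcal{P}_i$ restrict to \emph{compatible} partitions of $Y_i$. Starting from a witness $P'_2\cup P'_3\subset P_1$ in $Y$, two cases arise. If $P_1$ is one of the two small parts of $P$ (so $P_1\subset Q_i$), then both $P'_2,P'_3$ must be the small parts of $P'$ and the containment carries over unchanged. If instead $P_1$ is the large part of $P$, then at most one of $P'_2,P'_3$ is the large part of $P'$; replacing the outside-$Q_i$ portions by $*_i$ preserves the inclusion because the right-hand side already contains $*_i$, yielding $P'_2|_i\cup P'_3|_i\subset P_1|_i$ in $Y_i$.

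With the restriction construction in hand, the induction hypothesis gives $|\mathcal{P}_i|\leq|Y_i|-2=|Q_i|-1$ whenever $|Q_i|\geq 2$; when $|Q_i|=1$ the set $\mathcal{P}_i$ is automatically empty since two disjoint non-empty sets cannot fit inside a singleton, so the bound $|\mathcal{P}_i|\leq|Q_i|-1=0$ holds trivially. Summing,
\[
|\mathcal{P}|=1+\sum_{i=1}^{3}|\mathcal{P}_i|\leq 1+\sum_{i=1}^{3}(|Q_i|-1)=1+|Y|-3=|Y|-2,
\]
which completes the induction. This recovers the classical Burton--Keane combinatorial reduction, and beyond the case analysis for compatibility preservation described above I anticipate only routine bookkeeping.
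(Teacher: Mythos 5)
Your induction is correct, and it is worth noting that the paper itself gives no proof of Lemma \ref{cp} at all --- it is quoted from Burton and Keane \cite{bk}, whose original argument is essentially the same partition-refinement induction you carry out (an equivalent picture is to view the compatible family as the branch vertices of degree at least $3$ in a tree with leaf set $Y$). So your write-up supplies a self-contained proof where the paper supplies none. All the essential steps check out: the symmetry you implicitly use when you say compatibility ``singles out a unique part $Q_{i(P)}$ containing two parts of $P$'' follows by complementation (if $Q_2\cup Q_3\subset P_1$ then $P_2\cup P_3\subset Y\setminus(Q_2\cup Q_3)=Q_1$), uniqueness of $Q_{i(P)}$ follows from disjointness as you say, and your two-case verification that restriction preserves compatibility is sound. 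One point you should make explicit rather than leave as ``routine bookkeeping'': the map $P\mapsto P|_i$ must be injective on $\mathcal{P}_i$, since otherwise the induction hypothesis only bounds the number of \emph{distinct} restricted partitions. This is true --- if $P|_i=P'|_i$ then the two small parts agree as an unordered pair, hence $P_3=Y\setminus(P_1\cup P_2)=P'_3$ and $P=P'$ --- but it is the one place where the argument could silently undercount, so it deserves a sentence. With that added, the proof is complete.
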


\begin{theorem}Let $\mu$ be a translation-invariant Gibbs measure on 1-2 model configurations $\Omega$. Then $\mu$-almost surely every $\omega\in\Omega$ has at most one infinite $\{001\}$-cluster.
\end{theorem}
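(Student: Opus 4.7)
The plan is to adapt the classical Burton--Keane trifurcation argument to the 1-2 model, substituting the usual single-site finite-energy modification by the larger-scale rearrangement of a box already employed in the proof of Lemma \ref{fno}.

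First I would reduce to the ergodic case. Any translation-invariant Gibbs measure admits an ergodic decomposition into translation-invariant ergodic components, and the event $\{\mathcal{N}_{\{001\}}=k\}$ is translation-invariant for every $k\in\{0,1,\dots,\infty\}$, so it suffices to assume $\mu$ is ergodic. Ergodicity then forces $\mathcal{N}_{\{001\}}$ to be $\mu$-a.s.\ constant, and by Lemma \ref{fno} this constant lies in $\{0,1,\infty\}$. It remains to rule out $\mu(\mathcal{N}_{\{001\}}=\infty)=1$.

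Assuming the latter for contradiction, I would call a vertex $v$ a \emph{$\{001\}$-trifurcation} in $\omega$ if $v$ has $\{001\}$-configuration, lies in an infinite $\{001\}$-cluster $C$, all three hexagonal-lattice neighbors of $v$ belong to $C$, and the three components of $C\setminus\{v\}$ are all infinite. Next, I would show that the probability of a trifurcation at the origin is strictly positive. On the event $\mathcal{S}_N$ that $B_N$ meets at least three distinct infinite $\{001\}$-clusters (which has probability tending to $1$ as $N\to\infty$), I would exhibit a modification of the configuration inside $B_{N+2}$ that installs a \emph{tripod} of three disjoint $\{001\}$-paths joining the origin to three boundary vertices of $B_{N+2}$, each already attached (via the unchanged exterior) to a different one of the three preselected infinite clusters, while setting the remaining interior vertices to non-$\{001\}$ configurations and handling the boundary and corner vertices as in Lemma \ref{fno} to preserve the degree-$1$-or-$2$ constraint. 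The Gibbs property then bounds the trifurcation probability below by a positive constant, and translation-invariance gives a positive density $p>0$ of trifurcations.

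The contradiction is then extracted by a counting argument using Lemma \ref{cp}. Let $T_n$ be the set of trifurcations in $B_n$; by translation invariance $\mathbb{E}|T_n|\geq p|B_n|=\Theta(n^2)$. For each $t\in T_n$, the three infinite components of $C(t)\setminus\{t\}$ must reach $\partial B_n$, determining a partition $P(t)=\{P_1(t),P_2(t),P_3(t)\}$ of those boundary vertices of $B_n$ that lie in $C(t)$. For any two trifurcations $s,t\in T_n$, the vertex $s$ belongs to exactly one of the three components of $C(t)\setminus\{t\}$, so the boundary traces of the other two components lie entirely in one block of $P(s)$, making the family $\{P(t):t\in T_n\}$ pairwise compatible in the sense preceding Lemma \ref{cp}. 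That lemma then yields $|T_n|\leq|\partial B_n|-2=O(n)$, contradicting the quadratic lower bound on $\mathbb{E}|T_n|$ for large $n$.

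The main obstacle will be the positive-probability step: since flipping a single vertex's state generically violates the 1-2 constraint at its neighbors, the modification must rearrange the entire configuration inside $B_{N+2}$ so that (a) the three preselected infinite $\{001\}$-clusters continue uninterrupted outside $B_{N+2}$, (b) three disjoint internal $\{001\}$-paths realize a tripod meeting only at the origin, and (c) the 1-2 degree constraint is satisfied everywhere, including at the two corner vertices that required special handling in Lemma \ref{fno}. Exhibiting such a modification for every configuration in $\mathcal{S}_N$, with a probability cost bounded below uniformly, is the technical crux.
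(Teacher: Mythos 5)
Your overall architecture is the correct one and matches the paper's: ergodic decomposition, Lemma \ref{fno} to reduce to ruling out $\mathcal{N}_{\{001\}}=\infty$, a positive-probability ``branching'' event produced by modifying the configuration inside a box, and the Burton--Keane count via Lemma \ref{cp}. The counting step is essentially right, with the minor caveat that compatibility of $P(s)$ and $P(t)$ only makes sense when $s$ and $t$ lie in the \emph{same} infinite cluster, so the bound $|Y|-2$ must be applied cluster by cluster and then summed over the distinct infinite clusters meeting the boundary, as the paper does. The genuine gap is the positive-probability step, which you yourself flag as ``the technical crux'' without carrying it out --- and in this problem that step is the entire content, since it is precisely where the failure of finite energy must be overcome. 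Your single-vertex trifurcation formulation makes it unnecessarily hard: you would need three $\{001\}$-paths from the origin's three neighbours to three boundary attachment points that are pairwise vertex-disjoint \emph{and} pairwise non-adjacent (otherwise the three components of $C\setminus\{v\}$ coalesce, since cluster connectivity here is site-adjacency), with every other vertex of $B_{N+2}$ forced to a non-$\{001\}$ state, all while the 1-2 degree constraint holds everywhere, including at the two corner vertices having two exterior neighbours. No such construction is exhibited, and it is not obvious that one exists uniformly over all exterior configurations. There is also a quantitative defect in your event $\mathcal{S}_N$: ``at least three infinite clusters meet $B_N$'' is not enough, because the three preselected clusters must attach to $B_{N+2}$ at boundary vertices that survive the corner repairs; the paper demands at least $31$ clusters precisely so that three of them can be chosen avoiding the $28$ vertices near the two corner hexagons.

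The paper resolves the crux by replacing the trifurcation \emph{vertex} with an \emph{encounter box}: it fills all of $B_N$ with $\{001\}$-vertices simply by making every interior edge present if and only if it is horizontal (so no disjoint tripod paths are needed --- the whole box is one $\{001\}$ blob), arranges the boundary ring of $B_{N+2}$ so that every boundary vertex except the three chosen attachment vertices $u_1,u_2,u_3$ fails to have configuration $\{001\}$, and then runs the identical compatible-partition count with boxes in place of points (the partition being induced by the three infinite components of $\mathcal{C}\setminus B_{N+2}(i,j)$). If you substitute this box-level branching event for your trifurcation, your argument closes; as written, the central modification step is asserted rather than proved.
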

\begin{proof}By ergodic decomposition, we may assume without loss of generality that $\mu$ is ergodic, so that $\mathcal{N}_{\{001\}}$, the total number of infinite $\{001\}$ clusters is constant $\mu$-almost surely. Then by Lemma \ref{fno}, $\mathcal{N}_{\{001\}}$ is either zero, one, or infinity. If $\mathcal{N}_{\{001\}}$ is zero or one, we are finished, so assume $\mathcal{N}_{\{001\}}$ is infinity. 

A box $B_n$ is an encounter box if the following two conditions hold
\begin{itemize}
\item There exists an infinite $\{001\}$ cluster $\mathcal{C}$, such that $B_n\subset \mathcal{C}$;
\item the set $\mathcal{C}\setminus B_{n+2}$ has no finite components and exactly three infinite components. 
\end{itemize}
We claim that if $\mu(\mathcal{N}_{\{001\}}=\infty)=1$, there exists $N$, such that the probability that the box $B_N$ centered at $(0,0)$ is an encounter box is strictly positive. To see that, let $B_{n}$ be an $n\times n$ box centered at the origin, as defined in the proof of Lemma \ref{fno}. Let $\mathcal{N}_{B_n,\{001\}}$ be the number of inifinite $\{001\}$ clusters intersecting $B_n$. Then
\begin{eqnarray*}
\lim_{n\rightarrow\infty}\mu(\mathcal{N}_{B_n,\{001\}}\geq 31)=1
\end{eqnarray*}
As a result, there exists $N$, such that
\begin{eqnarray*}
\mu(\mathcal{N}_{B_{N+2},\{001\}}\geq 31)>\frac{1}{2}
\end{eqnarray*}
Let $\omega$ be a 1-2 model configuration satisfying $\mathcal{N}_{B_{N+2},\{001\}}\geq 31$, then we can find three boundary vertices $u_1,u_2,u_3$ (vertices in $B_{N+2}$ with at least one neighbor outside $B_{N+2}$) of $B_{N+2}$, such that
\begin{itemize}
\item $u_1,u_2,u_3$ are in three different infinite $\{001\}$ clusters of $\omega$;
\item let $\mathcal{C}_1$, $\mathcal{C}_2$, $\mathcal{C}_3$ be the three different infinite clusters including $u_1$, $u_2$, $u_3$, then $(\mathcal{C}_1\cup\mathcal{C}_2\cup\mathcal{C}_3)\cap(\bar{h}_1\cup\bar{h}_2\cup\{v_1',v_1'',w_1',w_1''\})=\phi$, where $\bar{h}_1$ (resp. $\bar{h}_2$) consists of all the vertices in the hexagon $h_1$ (resp. $h_2$), as well as vertices incident to $h_1$ (resp. $h_2$), and $h_1$, $h_2$ are the two hexagons outside the two corners of $B_{N+2}$, as shown in Figure \ref{hex}.
\end{itemize}

Since $|\bar{h}_1\cup\bar{h}_2\cup\{v_1',v_1'',w_1',w_1''\}|=28$, the number of infinite $\{001\}$ clusters intersecting $\bar{h}_1\cup\bar{h}_2\cup\{v_1',v_1'',w_1',w_1''\}$ is at most 24. Moreover, since in $\omega$, the number of infinite $\{001\}$ clusters intersecting $B_{N+2}$ is at least 31, we can always find $u_1,u_2,u_3$, satisfying the conditions listed above.

To make $B_{N}$ an encounter box, we change configurations in $B_{N+2}$ as follows. First of all, we define the outer contour of $B_{N+2}$ to be the closed contour consisting of all the interior edges (edges connecting two vertices in $B_{N+2}$) sharing a  vertex with a boundary edge of $B_{N+2}$. Let all the horizontal edges (a-type edges) on the outer contour of $B_{N+2}$ be present. Let $u$ be a boundary vertex of $B_{N+2}$ (vertex with at least one neighbor outside $B_{N+2}$), other than $u_1,u_2,u_3,w_1,v_1$. $u$ is incident to three edges: $e_h$, the horizontal edge; $e_b$, the boundary edge; and $e_i$, the edge other than $e_h$ and $e_b$. $e_h$ is present for any such $u$ after the first step of changing configurations described above. We change the configurations of $e_i$, if necessary, such that if $e_b$ is present, then $e_i$ is not present; and if $e_b$ is not present, then $e_i$ is present. This way, all the boundary vertices of $B_{N+2}$ except $u_1,u_2,u_3,w_1,w_4$ have degree 2, and do not have a $\{001\}$ configuration. Let $p$ (resp. $q$) be the vertex adjacent to $v_1$ (resp. $w_1$) through a horizontal edge, see Figure \ref{hex}. The configurations of  edges $v_1p$ and $w_1q$ are rearranged according to the configurations of on the four other incident edges of $p$ and $q$, such that at vertices $p$ and $q$, the rule that one or two incident edges are present (1-2 law) is not violated. To make sure the configurations at $v_1$ and $w_1$ satisfy the 1-2 law, we will change the configurations on the edges of the hexagons $h_1$ and $h_2$ (see Figure \ref{hex}). We discuss the case of $h_1$ here, the case of $h_2$ is exactly the same.  We give $h_1$ a configuration such that each alternating side of $h_1$ is present. This way no matter what configurations are outside $h_1$, we always get a configuration on vertices of $h_1$ which does not violate the 1-2 law.

Obviously, after such a change of configurations, the only possible ways to connect $\{001\}$ clusters outside $B_{N+2}$ to $\{001\}$ clusters in $B_{N}$ is through vertices $u_1,u_2,u_3$. That is because any boundary vertices of $B_{N+2}$ except $u_1,u_2,u_3,v_1,w_1$ do not have a $\{001\}$ configuration. Moreover, the method to arrange configurations on $h_1$ (resp. $h_2$) implies that at least one non-horizontal edge incident to $v_1$ (resp. $w_1$) is present, hence $v_1$ (resp. $w_1$) does not have the $\{001\}$ configuration.   Now consider the box $B_N$ centered at the origin. Remove the boundary edges of $B_N$ from the configuration. For each interior edge of $B_N$, it is present if and only if it is horizontal. This way all the vertices in $B_N$ have a $\{001\}$-configuration. Moreover,  all the vertices in $B_{N+2}$ do not violate the 1-2 law. To check this claim, we only need to check the vertices of $B_{N+2}$ which are neighbors of vertices of $B_N$. Any such vertex has at least one present incident edges, namely the horizontal edge, and at least one non-present incident edges, namely the boundary edge of $B_N$, hence the 1-2 law is satisfied. 

Again Let $\omega'$ be the new configuration obtained from $\omega$ by the configuration-changing process described above.
It is trivial to check by definition that $B_N$ is an encounter box for the configuration $\omega'$. Consider the probability of those configurations in which $B_N$ is an encounter box, this probability is bounded below by the probability $\mu(\mathcal{N}_{B_{N+2},\{001\}}\geq 27)$, multiplying a factor caused by changing configurations at finitely many vertices. Namely,
\begin{eqnarray*}
\mu(B_N\ is\ an\ encounter\ box)\geq \frac{1}{2}\left(\frac{c}{6a}\right)^{2(N+2)^2+10}>0
\end{eqnarray*}

Let $B_{s(N+2)}$ be an $s(N+2)\times s(N+2)$ box centered at the origin, consisting of $s^2$ non-overlapping $(N+2)\times (N+2)$ boxes (each one is a translation of the other). Let $B_N(i,j)$ be the $N\times N$ box centered at $(i,j)$. Let $\mathcal{B}_N^s$ be the collection of all the $N\times N$ box included in one of the $s^2$ non-overlapping $(N+2)\times (N+2)$ boxes in $B_{s(N+2)}$, such that each $N\times N$ box and the corresponding $(N+2)\times (N+2)$ box have the same center.

By translation invariance, the probability that each $B_N(i,j)$  is an encounter box is at least $\frac{1}{2}\left(\frac{c}{6a}\right)^{2(N+2)^2+10}$, so the expected number of encounter boxes in $\mathcal{B}_N^{s}$ is at least
\begin{eqnarray}
\frac{1}{2}\left(\frac{c}{6a}\right)^{2(N+2)^2+10}s^2\label{lb}
\end{eqnarray}

Let $\mathcal{C}$ be a fixed infinite $\{001\}$ cluster of $\omega$. Define
\begin{eqnarray*}
Y=\mathcal{C}\cap\{outer\ boundary\ of\ B_{s(N+2)}\},
\end{eqnarray*}
where the outer boundary of $B_{s(N+2)}$ are the set of those vertices outside $B_{s(N+2)}$ and incident to  vertices in $B_{s(N+2)}$.

If $B_N(i,j)\in\mathcal{B}_{N}^s$ is an encounter box for $\omega$ with respect to $\mathcal{C}$, then the removal of $B_{N+2}(i,j)$ from $\mathcal{C}$ defines a partition
\begin{eqnarray*}
P=\{P_1,P_2,P_3\}
\end{eqnarray*}
of the set $Y$, such that $P_i\neq \phi$ for $1\leq i\leq 3$. Namely, if $\mathcal{D}_1$, $\mathcal{D}_2$, $\mathcal{D}_3$ be the three components of $\mathcal{C}\setminus B_{N+2}(i,j)$, then let $P_i=\mathcal{D}_i\cap Y$. 

Moreover, if $B_N(i',j')\in\mathcal{B}_N^s$ is another encounter box with respect to the same infinite $\{001\}$ cluster $\mathcal{C}$ such that $(i',j')\neq (i,j)$ then $B_N(i',j')$ gives another partition $Q=\{Q_1,Q_2,Q_3\}$ of $Y$, and the indices of $P$ and $Q$ can be chosen in such a way that $Q_2\cup Q_3\subset P_1$; simply choose $Q_1$ to correspond to the component of $\mathcal{C}\setminus B_{N+2}(i',j')$ containing $B_N(i,j)$. Hence the set of partitions corresponding to encounter boxes of $\mathcal{C}$ in $\mathcal{B}_N^s$ forms a compatible partition of $Y$. By Lemma \ref{cp}, the number of compatible partitions is at most $|Y|-2$, where $|Y|$ is the number of vertices in $Y$. Summing over all different infinite clusters, we have the total number of encounter boxes in $\mathcal{B}_N^s$ is bounded above by $4s(N+4)$, which is less than (\ref{lb}) when $s$ is large. The contradiction shows that it is $\mu$-a.s. impossible that there are infinite many infinite ${\{001\}}$ clusters. By Lemma \ref{fno} almost surely there is at most one infinite $\{001\}$ cluster.
\end{proof}

\noindent\textbf{Acknowledgement} This work was supported by the Engineering and Physical Sciences Research Council under grant EP/103372X/1. The author thanks Geoffrey Grimmett for suggesting the possibility of this topic.

\end{document}